\theoremstyle{plain}
\newtheorem{theorem}{Theorem}[section]
\newtheorem{proposition}[theorem]{Proposition}
\newtheorem{corollary}[theorem]{Corollary}
\theoremstyle{definition}
\newtheorem{definition}[theorem]{Definition}
\DeclareMathOperator{\lcm}{LCM}
\newcommand{\Q}{\ensuremath{\mathbb{Q}}}
\newcommand{\Singular}{\textsc{Singular}}
\begin{document}

\title[On the Primary Decomposition of Some Determ. Hyperedge Ideal]%
{On the Primary Decomposition \\ of Some Determinantal Hyperedge Ideal}

\author{Gerhard Pfister}
\address{Gerhard Pfister\\
Department of Mathematics\\
TU Kaiserslautern\\
Erwin-Schr\"odinger-Str.\\
67663 Kaiserslautern\\
Germany}
\email{pfister@mathematik.uni-kl.de}

\author{Andreas Steenpa\ss}
\address{Andreas Steenpa\ss\\
Department of Mathematics\\
TU Kaiserslautern\\
Er\-win-Schr\"odinger-Str.\\
67663 Kaiserslautern\\
Germany}
\email{steenpass@mathematik.uni-kl.de}

\keywords{primary decomposition, determinantal ideal}

\date{\today}

\begin{abstract}
In this paper we describe the method which we applied to successfully compute
the primary decomposition of a certain ideal coming from applications in
combinatorial algebra and algebraic statistics regarding conditional
independence statements with hidden variables. While our method is based on the
algorithm for primary decomposition by Gianni, Trager and Zacharias, we were
not able to decompose the ideal using the standard form of that algorithm, nor
by any other method known to us.
\end{abstract}

\maketitle

\section{Introduction}

The aim of this paper is to describe the method which we applied to compute the
primary decomposition of a certain ideal. The
importance of this ideal comes from applications in combinatorial algebra and
algebraic statistics, in particular regarding conditional independence
statements, see \cite{M}, \cite{MR}, \cite{EH} or \cite{EHHM}.

Conditional independence (CI) is an important tool in statistical modelling.
There has been a lot of recent activity on ideals associated to conditional
independence statements, when all random variables are observed. However, in
applications such as causal reasoning, some variables are hidden and it is
important to know what constraints on the observed variables are caused by
hidden variables.

From algebraic point of view, knowing the primary decomposition of CI ideals
leads to important results in causal inference. We have studied a
particular ideal of interest in causal inference whose primary decomposition
has led to interesting theoretical and computational results. It is shown in
this paper that this ideal is equidimensiomal and has two prime components. In
particular, one of the prime components is not a determinantal ideal which is a
surprising result. In addition, both components are important in the sense of
causal inference, as none of them contains a variable, see
\cite[Example~4.2]{CMR}.

We tried the algorithms for primary decomposition which are available in
\Singular{}, that is, the one of Gianni, Trager and Zacharias \cite{GTZ} as
well as the one of Shimoyama and Yokoyama \cite{SY}, but none of
them succeeded to decompose the ideal within one month.

The outline of this paper is quite simple: The input ideal and some of its
combinatorial properties are given in Section~\ref{sec:input_ideal}. This ideal
can be decomposed using the computational method presented in
Section~\ref{sec:computational_method}. This method combines four improvements
to the well-known algorithm for primary decomposition by Gianni, Trager and
Zacharias \cite{GTZ} which are described in detail in separate subsections.
Finally, in Section~\ref{sec:result}, we specify the result of the primary
decomposition and reveal some of the combinatorial properties of the computed
prime components. However, the combinatorial structure of the second prime
component is not yet fully understood.

\section{Input ideal}%
\label{sec:input_ideal}

The ideal $I$ of which we would like to compute a primary decomposition is a
determinantal hyperedge ideal. Over the polynomial ring
$R := \Q[x_1, \ldots, x_{12},\, y_1, \ldots, y_{12},\, z_1, \ldots, z_{12}]$,
consider the $3 {\times} 12$-matrix
\[
M :=
\begin{pmatrix}
x_1 & x_2 & x_3 & x_4 & x_5 & x_6 & x_7 & x_8 & x_9 & x_{10} & x_{11} & x_{12}
\\
y_1 & y_2 & y_3 & y_4 & y_5 & y_6 & y_7 & y_8 & y_9 & y_{10} & y_{11} & y_{12}
\\
z_1 & z_2 & z_3 & z_4 & z_5 & z_6 & z_7 & z_8 & z_9 & z_{10} & z_{11} & z_{12}
\end{pmatrix} .
\]
Following the notation for determinantal hyperedge ideals in \cite{CMR}, set
\begin{align*}
R_1 &:= \{ 1, 2, 3 \} =: N, & C_1 &:= \{ 1, 4, 7, 10 \}, \\
R_2 &:= \{ 4, 5, 6 \},      & C_2 &:= \{ 2, 5, 8, 11 \}, \\
R_3 &:= \{ 7, 8, 9 \},      & C_3 &:= \{ 3, 6, 9, 12 \}, \\
R_4 &:= \{ 10, 11, 12 \},
\end{align*}
and denote the minor of $M$ with row indices $A$ and column indices $B$ by
$[A|B]_M$. Furthermore, for any set $X$ and any non-negative integer $s$, we
denote by $\binom{X}{s}$ the set of all subsets of $X$ which contain exactly
$s$ elements.

With this notation, we define the ideal $I \subseteq R$ as
\[
I :=
\left\langle [N|B]_M \;\middle|\;
B = R_i,\, i \in \{ 1, 2, 3, 4 \} \;\text{ or }\;
B \in \binom{C_j}{3},\, j \in \{ 1, 2, 3 \} \right\rangle,
\]
that is,
{
\allowdisplaybreaks
\begin{align*}
I = \langle
& x_1 y_4 z_7 - x_1 y_7 z_4 - x_4 y_1 z_7 + x_4 y_7 z_1 + x_7 y_1 z_4
  - x_7 y_4 z_1, \\
& x_1 y_4 z_{10} - x_1 y_{10} z_4 - x_4 y_1 z_{10} + x_4 y_{10} z_1
  + x_{10} y_1 z_4 - x_{10} y_4 z_1, \\
& x_1 y_7 z_{10} - x_1 y_{10} z_7 - x_7 y_1 z_{10} + x_7 y_{10} z_1
  + x_{10} y_1 z_7 - x_{10} y_7 z_1, \\
& x_4 y_7 z_{10} - x_4 y_{10} z_7 - x_7 y_4 z_{10} + x_7 y_{10} z_4
  + x_{10} y_4 z_7 - x_{10} y_7 z_4, \\
& x_2 y_5 z_8 - x_2 y_8 z_5 - x_5 y_2 z_8 + x_5 y_8 z_2 + x_8 y_2 z_5
  - x_8 y_5 z_2, \\
& x_2 y_5 z_{11} - x_2 y_{11} z_5 - x_5 y_2 z_{11} + x_5 y_{11} z_2
  + x_{11} y_2 z_5 - x_{11} y_5 z_2, \\
& x_2 y_8 z_{11} - x_2 y_{11} z_8 - x_8 y_2 z_{11} + x_8 y_{11} z_2
  + x_{11} y_2 z_8 - x_{11} y_8 z_2, \\
& x_5 y_8 z_{11} - x_5 y_{11} z_8 - x_8 y_5 z_{11} + x_8 y_{11} z_5
  + x_{11} y_5 z_8 - x_{11} y_8 z_5, \\
& x_3 y_6 z_9 - x_3 y_9 z_6 - x_6 y_3 z_9 + x_6 y_9 z_3 + x_9 y_3 z_6
  - x_9 y_6 z_3, \\
& x_3 y_6 z_{12} - x_3 y_{12} z_6 - x_6 y_3 z_{12} + x_6 y_{12} z_3
  + x_{12} y_3 z_6 - x_{12} y_6 z_3, \\
& x_3 y_9 z_{12} - x_3 y_{12} z_9 - x_9 y_3 z_{12} + x_9 y_{12} z_3
  + x_{12} y_3 z_9 - x_{12} y_9 z_3, \\
& x_6 y_9 z_{12} - x_6 y_{12} z_9 - x_9 y_6 z_{12} + x_9 y_{12} z_6
  + x_{12} y_6 z_9 - x_{12} y_9 z_6, \\
& x_1 y_2 z_3 - x_1 y_3 z_2 - x_2 y_1 z_3 + x_2 y_3 z_1 + x_3 y_1 z_2
  - x_3 y_2 z_1, \\
& x_4 y_5 z_6 - x_4 y_6 z_5 - x_5 y_4 z_6 + x_5 y_6 z_4 + x_6 y_4 z_5
  - x_6 y_5 z_4, \\
& x_7 y_8 z_9 - x_7 y_9 z_8 - x_8 y_7 z_9 + x_8 y_9 z_7 + x_9 y_7 z_8
  - x_9 y_8 z_7, \\
& x_{10} y_{11} z_{12} - x_{10} y_{12} z_{11} - x_{11} y_{10} z_{12}
  + x_{11} y_{12} z_{10} + x_{12} y_{10} z_{11} - x_{12} y_{11} z_{10}
  \rangle \\
{} \subseteq {}
& \Q[x_1, \ldots, x_{12},\, y_1, \ldots, y_{12},\, z_1, \ldots, z_{12}]
\,.
\end{align*}
}

The ideal $I$ exhibits a kind of symmetry in the sense that it stays
invariant under certain permutations of the variables in the polynomial ring
$R$. In fact, the following operations on the matrix $M$ in the above
construction lead to the same ideal $I$:
\begin{itemize}
\item
permuting the rows
\item
permuting the sets of columns which correspond to the index sets $R_1$, $R_2$,
$R_3$ and $R_4$
\item
permuting the sets of columns which correspond to the index sets $C_1$, $C_2$
and $C_3$
\end{itemize}

Let $S_n$ be the symmetric group of order $n$ as usual. Then the above
observations imply that the symmetry group $\mathcal{S} \leq S_{36}$ of $I$ has
a subgroup $\mathcal{S'}$ isomorphic to $S_3 \times S_4 \times S_3$. We believe
that indeed $\mathcal{S} \cong \mathcal{S'}$. However, this is more difficult
to prove.

\section{Computational method}%
\label{sec:computational_method}

Our computational method is based on the GTZ algorithm for primary
decomposition by Gianni, Trager and Zacharias \cite{GTZ} as presented in
\cite[Chapter 4]{GP}. Let us recall the fact that the GTZ algorithm computes
some maximal dimensional associated primary ideals $Q_1, \ldots, Q_s$ of $I$
using maximal independent sets to reduce the problem to the zero-dimensional
case, as well as a polynomial $h^m \notin I$ such that
$I : \langle h^m \rangle = I : \langle h^\infty \rangle
= Q_1 \cap \ldots \cap Q_s$
and $I = \left( I : \langle h^m \rangle \right) \cap \langle I, h^m \rangle$.

An implementation of this algorithm can be found in the \Singular{} library
\texttt{primdec.lib} \cite{DLPS}. However, neither this implementation nor the
one of the algorithm of Shimoyama and Yokoyama \cite{SY} in the same library
succeeded to compute a primary decomposition of the input ideal $I$ defined in
the previous section within one month.

We propose four improvements for applying this algorithm to our particular
example:
\begin{enumerate}
\item
carefully choosing the maximal independent sets, based on data from
intermediate steps;
\item
if an ideal $P$ is expected to be a prime ideal, applying a special algorithm
to check the primality of $P$;
\item
choosing a monomial order for saturation which is compatible with the monomial
orders used in the previous steps;
\item
making use of the symmetry of the input ideal for the saturation step.
\end{enumerate}

These four improvements will be explained in detail in the following
subsections. The last improvement cannot easily be applied in general if the
input ideal does not admit any useful symmetry. On the other hand, we expect
that especially the first and the third improvement are also beneficial for
other classes of ideals.

\subsection{Choosing maximal independent sets}

In the GTZ algorithm, the reduction to the zero-dimensional case requires a
maximal independent set, see \cite[Algorithm~4.3.2]{GP}. However, the input
ideal might admit several maximal independent sets, any one of which can be
used in the subsequent steps.

For the convenience of the reader, we recall the definition of maximal
independent sets:
\begin{definition}[{\cite[Definition~3.5.3]{GP}}]
Let $I \subset K[x_1, \ldots, x_n]$ be an ideal. Then a subset
\[
u \subset x = \{ x_1, \ldots, x_n \}
\]
is called an independent set (with respect to $I$) if $I \cap K[u] = 0$.
An independent set $u \subset x$ (with respect to $I$) is called maximal if
$\dim(K[x]/I) = |u|$.
\end{definition}

In our case, the \Singular{} command \verb+indepSet(., 0)+, when applied to a
Gr\"obner basis of $I$ w.r.t.\@ the degree reverse lexicographic order with
$x_1 > \ldots > x_{12} > y_1 > \ldots > y_{12} > z_1 > \ldots > z_{12}$, yields
$17,223$ different maximal independent sets. The ideal $I$ may even have more
maximal independent sets, but it is computationally hard to list all of them.

Depending on the choice made in the algorithm, the running time needed for the
computation of the next primary component, especially for the saturation step,
may vary enormously. Thus we propose to proceed as follows: Let $K[X]$ with
$X = \{ X_1, \ldots, X_n \}$ be the polynomial ring of the input ideal.
Randomly choose a maximal independent set $u \subseteq X$ of $I$ and compute
a minimal Gr\"obner basis $G_u \subseteq K[X]$ of the zero-dimensional ideal
$I K(u)[X \setminus u] \subseteq K(u)[X \setminus u]$, see
\cite[Proposition~4.3.1~(1)]{GP}, where $K(u)$ is the field of fractions of
$K[u]$. Further compute
\[
d_u
:= {\dim_{K(u)[X \setminus u]}}
{\left( K(u)[X \setminus u] \bigm/ I K(u)[X \setminus u] \right)}
\]
as well as the degrees and the number of terms of the leading coefficients of
the elements in $G_u$, considered as polynomials in $K[u]$. Repeat this for
several maximal independent sets $u$ and sort them, in this order,
\begin{enumerate}
\item
by increasing vector space dimension $d_u$,
\item
by increasing (maximal) degree and finally
\item
by the (maximal) number of terms of the leading coefficients of the elements in
$G_u$.
\end{enumerate}
This defines a partial order on the maximal independent sets. Choose one of the
first sets w.r.t.\@ this order and proceed with the GTZ
algorithm as usual.

It is reasonable to expect, and in fact our experiments confirm, that on
average the above choice leads to improved running times for the saturation
step compared to other, for example random, choices.

There is a tradeoff between the number of maximal independent sets for which
the above mentioned data is computed and the overall speedup which can be
achieved by this method. In our example, we used about 700 randomly chosen
maximal independent sets. Finally we subsequently picked
\begin{align*}
u_1 &:= \{ x_1, \ldots, x_{12}, y_1, \ldots, y_{12}, z_1, z_2 \} \;
\text{ and then} \\
u_2 &:= \{ x_1, x_2, x_4, x_5, x_8, x_9, x_{11}, x_{12}, y_1, \ldots, y_{12},
z_1, z_4, z_8, z_9, z_{11}, z_{12} \}
\end{align*}
to find the two primary components described in
Section~\ref{sec:result}. Especially the computation of the second primary
component was far out of reach for many other choices of maximal independent
sets.

\subsection{Checking ideals for primality}

While trying to compute a primary decomposition of some given ideal $I$ by
whatever means, one may face the situation that one of the computed ideals $P$
is expected to be one of the prime components of $I$, but it is not known a
priori whether or not $P$ is indeed a prime ideal. For example, this may happen
if $I$ is defined in a polynomial ring over a field of characteristic zero, but
$P$ has been found via computations in positive characteristic. In that
situation, Algorithm~\ref{alg:primality_check} can be used to check the
primality of $P$. This algorithm may be known to some experts in the field, but
we did not find any reference.

\begin{algorithm}
\caption{Primality check}%
\label{alg:primality_check}
\begin{algorithmic}[1]
\REQUIRE $P \subseteq K[X] = K[X_1, \ldots, X_n]$
\ENSURE true if $P$ is a prime ideal, false otherwise
\STATE choose a maximal independent set $u \subseteq X$ of $P$
\STATE compute a minimal Gr\"obner basis $G \subseteq K[X]$ of
$P K(u)[X \setminus u]$
\IF {$P K(u)[X \setminus u]$ is not a maximal ideal in $K(u)[X \setminus u]$}
\RETURN false
\ENDIF
\STATE let $c_1, \ldots c_s \in K[u]$ be the leading coefficients of the
elements in $G$
\FOR {$i = 1, \ldots, s$}
\IF {$P \neq P : \langle c_i^\infty \rangle$}%
\label{line:sat}
\RETURN false
\ENDIF
\ENDFOR
\RETURN true
\end{algorithmic}
\end{algorithm}

The correctness of this algorithm is based on the following statement:

\begin{proposition}%
\label{prop:primality_check}
Let $P \subseteq K[X] = K[X_1, \ldots, X_n]$ be an ideal and let
$u \subseteq X$ be a maximal independent set of $P$. Then $P$ is a prime ideal
if and only if $P K(u)[X \setminus u]$ is a maximal ideal in
$K(u)[X \setminus u]$ and $P = P K(u)[X \setminus u] \cap K[X]$.
\end{proposition}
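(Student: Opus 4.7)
The plan is to prove the two directions separately, with the ``if'' direction being essentially formal and the ``only if'' direction requiring the zero-dimensionality coming from maximality of the independent set.

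For the ``if'' direction, assume $PK(u)[X \setminus u]$ is maximal in $K(u)[X \setminus u]$ and $P = PK(u)[X \setminus u] \cap K[X]$. Since maximal ideals are prime, $PK(u)[X \setminus u]$ is prime, so for any $f, g \in K[X]$ with $fg \in P$ I would push into $K(u)[X \setminus u]$ to conclude that either $f$ or $g$ lies in $PK(u)[X \setminus u]$, and then use the second hypothesis (contraction equals $P$) to pull back to $P$. This is the routine part.

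The ``only if'' direction is where the work lies. Assume $P$ is prime. Since $u$ is an independent set, $P \cap K[u] = 0$, so $S := K[u] \setminus \{0\}$ consists of non-zero-divisors modulo $P$, and $PK(u)[X \setminus u] = S^{-1}P$ is the extension under a localization. Localization preserves primality, so this extended ideal is prime. To upgrade prime to maximal, I would invoke the fact (Proposition~4.3.1 in \cite{GP}) that maximality of $u$ as an independent set is equivalent to $\dim_{K(u)}\bigl(K(u)[X \setminus u]/PK(u)[X \setminus u]\bigr) < \infty$, so in particular $PK(u)[X \setminus u]$ is zero-dimensional in $K(u)[X \setminus u]$; a zero-dimensional prime in a Noetherian ring is maximal.

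It remains to show $P = PK(u)[X \setminus u] \cap K[X]$. The inclusion $\subseteq$ is trivial. For $\supseteq$, take $f$ in the contraction; by the standard description of localization, there exists $0 \neq c \in K[u]$ with $cf \in P$. Since $c \in K[u]$ and $P \cap K[u] = 0$, we have $c \notin P$, and primality of $P$ forces $f \in P$. The only genuine obstacle is the dimension argument used to get maximality from primality of the extended ideal; once that standard fact is cited, everything else is straightforward manipulation of localization and primality.
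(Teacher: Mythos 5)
Your proof is correct and follows essentially the same route as the paper: recognize $K(u)[X \setminus u]$ as the localization of $K[X]$ at $S = K[u] \setminus \{0\}$, invoke \cite[Proposition~4.3.1]{GP} for zero-dimensionality of the extended ideal, and then apply the standard prime--localization correspondence. The paper simply cites \cite[Proposition~3.11(iv)]{AM} where you unpack the argument (primality passes to $S^{-1}P$, zero-dimensional prime in a Noetherian ring is maximal, and the clearing-denominators step for the contraction), so the content is the same.
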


\begin{proof}
The ideal $P K(u)[X \setminus u]$ has dimension zero by
\cite[Proposition~4.3.1]{GP}.
Let $S = K[u] \setminus \{0\}$, then $K(u)[X \setminus u]$ is the localization
of $K[X]$ at $S$. Therefore the result follows from
\cite[Proposition~3.11(iv)]{AM}.
\end{proof}

\begin{corollary}
The output of Algorithm~\ref{alg:primality_check} is correct.
\end{corollary}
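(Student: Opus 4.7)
The plan is to reduce the correctness of Algorithm~\ref{alg:primality_check} to Proposition~\ref{prop:primality_check}, which characterizes primality of $P$ by two conditions: that $PK(u)[X \setminus u]$ be a maximal ideal in $K(u)[X \setminus u]$, and that $P = PK(u)[X \setminus u] \cap K[X]$. The algorithm tests the first condition explicitly in line~3, so the real work is to show that the loop over the leading coefficients $c_1, \ldots, c_s$ correctly detects when the second condition fails.

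First I would invoke the standard contraction formula underlying the GTZ algorithm (\cite[Proposition~4.3.1]{GP}), which gives
\[
P K(u)[X \setminus u] \cap K[X] \;=\; P : \langle h^\infty \rangle,
\qquad h := c_1 \cdots c_s,
\]
where the $c_i$ are the leading coefficients in $K[u]$ of the minimal Gröbner basis $G$ computed in line~2. Since $P \subseteq P K(u)[X \setminus u] \cap K[X]$ is automatic, the second condition of the proposition reduces to the equality $P = P : \langle h^\infty \rangle$.

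Next I would show that this saturation equality can be checked factor by factor, i.e.\ that $P = P : \langle h^\infty \rangle$ if and only if $P = P : \langle c_i^\infty \rangle$ for every $i$. The implication $(\Rightarrow)$ is immediate from the inclusions $P \subseteq P : \langle c_i^\infty \rangle \subseteq P : \langle h^\infty \rangle$. For $(\Leftarrow)$, given $f$ with $h^N f \in P$ for some $N$, write $h^N = c_1^N \cdots c_s^N$ and peel off one factor at a time, applying $P = P : \langle c_i^\infty \rangle$ at each step to conclude $f \in P$. Combining both equivalences, the algorithm returns true exactly when both conditions of Proposition~\ref{prop:primality_check} hold, which happens exactly when $P$ is prime.

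The main obstacle, in my view, is not the elementary algebra above but pinning down precisely which form of the contraction formula is being cited from \cite{GP}, since that statement is typically phrased in terms of a Gröbner basis of $P \subseteq K[X]$ obtained from $G$ by clearing denominators, and one must make sure that the $c_i$ appearing there are the same as the leading coefficients used in the algorithm. Once the displayed formula is established in the stated form, the rest of the proof is the short saturation equivalence sketched above.
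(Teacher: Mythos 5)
Your proof is correct and follows essentially the same route as the paper: reduce to Proposition~\ref{prop:primality_check}, then observe via \cite[Proposition~4.3.1]{GP} that the contraction $P K(u)[X\setminus u] \cap K[X]$ equals the saturation of $P$ at the leading coefficients, which can be tested one $c_i$ at a time. The only cosmetic differences are that the paper takes $h = \lcm(c_1,\ldots,c_s)$ (as literally stated in \cite{GP}) while you take the product $c_1\cdots c_s$ — harmless, since $\sqrt{\langle\lcm\rangle} = \sqrt{\langle c_1\cdots c_s\rangle}$ in the UFD $K[u]$, so the saturations agree — and that you spell out the peeling argument where the paper writes $P:\langle h^\infty\rangle = (\ldots(P:\langle c_1^\infty\rangle)\ldots : \langle c_s^\infty\rangle)$ and appeals to ``basic properties of ideal quotients.'' Your closing worry about how the citation is phrased is resolved by line~2 of the algorithm, which explicitly asks for $G\subseteq K[X]$ (denominators already cleared), matching the hypothesis of \cite[Proposition~4.3.1~(2)]{GP}.
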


\begin{proof}
Let $G \subseteq K[X]$ be a minimal Gr\"obner basis of $P K(u)[X \setminus u]$,
let $c_1, \ldots c_s \in K[u]$ be the leading coefficients of the elements in
$G$ as in the algorithm, and define $h := \lcm(c_1, \ldots c_s) \in K[u]$. Then
\[
P K(u)[X \setminus u] \cap K[X]
= P : \langle h^\infty \rangle
= \left( \ldots \left( P : \langle c_1^\infty \rangle \right) \ldots
: \langle c_s^\infty \rangle \right)
\]
by \cite[Proposition~4.3.1~(2)]{GP} and the basic properties of ideal
quotients. Thus the statement follows from
Proposition~\ref{prop:primality_check}.
\end{proof}

In our case, we made use of the above algorithm to confirm the primality of the
component $P_2$, see Section~\ref{sec:result}. Computationally, this was by far
the hardest part of finding a primary decomposition of the input ideal $I$
defined in Section~\ref{sec:input_ideal}.

A variant of Algorithm~\ref{alg:primality_check} is the following improvement
to the GTZ algorithm: Let $I = \langle f_1, \ldots, f_m \rangle \subseteq K[X]$
be the remaining ideal to decompose, of dimension greater than zero, and let
$u \subseteq X$ be a maximal independent set of $I$. Then as the next step, the
algorithm computes a primary decomposition
$I K(u)[X \setminus u] = Q_1 \cap \ldots \cap Q_s$ in dimension zero. As the
algorithm proceeds, the intersections $Q_i \cap K[X]$ are computed via
saturation of $Q_i$ w.r.t.\@ some $d_i \in K[u]$ to obtain primary components
of $I$, see \cite[paragraph below Algorithm~4.3.4]{GP}.

Now suppose $s = 1$, that is, $I K(u)[X \setminus u]$ is a primary ideal in
$K(u)[X \setminus u]$. Let $\{ g_1, \ldots, g_r \} \subseteq I \subseteq K[X]$
be a Gr\"obner basis of $I K(u)[X \setminus u] = Q_1$. Such a Gr\"obner basis
always exists: For example, take a lexicographical Gr\"obner basis of $I$ with
$X \setminus u > u$ and discard the elements not needed for a minimal Gr\"obner
basis of $Q_1$. The next step in the GTZ algorithm would be the saturation of
$I_0 := \langle g_1, \ldots, g_r \rangle_{K[X]} \subseteq I$ w.r.t.\@
$c \in K[u]$, the least commom multiple of the leading coefficients of the
$g_i$ considered as elements of $K(u)[X \setminus u]$.
But in this case, we have
$I_0 : \langle c^\infty \rangle = I : \langle c^\infty \rangle$.
Usually, it is computationally easier to saturate $I$ than $I_0$ because
loosely speaking, $I$ is already closer to the final result
$I_0 : \langle c^\infty \rangle$ than $I_0$.

In the case where $Q_1$ is even a maximal ideal, this trick is computationally
equivalent to Algorithm~\ref{alg:primality_check}.

\subsection{Choosing a monomial order for saturation}%
\label{ssec:order}

For ideals of dimension greater than zero, the GTZ algorithm requires a
saturation step, see \cite[Chapter~4.3]{GP}. As for many applications of
Gr\"obner bases, the running time of this step heavily depends on the choice of
the monomial order. Our experiments have shown that it is beneficial to choose
a monomial order which is compatible with the maximal independent set that has
been used in the preceding steps of the algorithm.

More precisely, let $K[X]$ with $X = \{ X_1, \ldots, X_n \}$ be the polynomial
ring of the input ideal as in the previous subsection and let $u \subseteq X$
be the chosen maximal independent set. We propose to use an elimination order
for $X \setminus u$ on $K[X]$ for the saturation step. For implementational
reasons, the lexicographical order with $X \setminus u > u$ is a good choice,
see \cite[Algorithm~4.3.2, step~2]{GP}.

\subsection{Making use of symmetry for the saturation step}%
\label{ssec:symmetry}

With notation as in Algorithm~\ref{alg:primality_check}, if the input ideal $P$
has some kind of symmetry which also occurs among the coefficients $c_i$, then
this can be used to speed up the computation. More precisely, let $\varphi$ be
a $K$-algebra automorphism of $K[X]$ which leaves $P$ invariant, that is,
$\varphi(P) = P$. Furthermore suppose that $\varphi(c_j) = c_i$ for some
indices $i, j \in \{ 1, \ldots, s \}$ with $i \neq j$, and that
$P = P : \langle c_j^\infty \rangle$ has already been checked in
line~\ref{line:sat} of Algorithm~\ref{alg:primality_check}. Because applying
$\varphi$ commutes with saturation, we then have
\[
P : \langle c_i^\infty \rangle
= \varphi(P) : \langle \varphi(c_j)^\infty \rangle
= \varphi(P : \langle c_j^\infty \rangle)
= \varphi(P) = P \,.
\]
Thus the check can be left out for $c_i$.

For computing a primary decomposition of the input ideal $I$ defined in
Section~\ref{sec:input_ideal}, it turned out that $\varphi(c_j) = c_i$ for some
indices $i, j \in \{ 1, \ldots, s \}$ with $i \neq j$ where $\varphi$ is the
$\Q$-algebra automorphism of
\[
\Q[x_1, \ldots, x_{12},\, y_1, \ldots, y_{12},\, z_1, \ldots, z_{12}]
\]
given by
\begin{align*}
\varphi(x_i) &:= x_{\sigma(i)}, \\
\varphi(y_i) &:= y_{\sigma(i)}, \\
\varphi(z_i) &:= z_{\sigma(i)},
\end{align*}
with $\sigma = (1 4) (2 5) (3 6) \in S_{12}$. The
automorphism $\varphi$ leaves $I$ and the two prime components $P_1$ and $P_2$
presented in Section~\ref{sec:result} invariant, which is easy to see for $I$
and $P_1$ and can be checked computationally for $P_2$. We used
Algorithm~\ref{alg:primality_check} to check the primality of $P_2$, so in our
case $P = P_2$. Thus computing the saturation
$P : \langle c_i^\infty \rangle$ is superfluous. However, we have performed
this computation once and it took almost 500~hours which is more than four
times as long as the saturation of $P$
w.r.t.\@ $c_j$, see Section~\ref{sec:result}. Since in this case, $\varphi$ is
just a permutation of variables, this example underlines once more the
importance of choosing a monomial order for the saturation step as discussed in
Subsection~\ref{ssec:order}.

\section{Main Result}%
\label{sec:result}

Using the computational method from the previous section, we were able to show
the following:
\begin{theorem}
The ideal
$I \subseteq R
= \Q[x_1, \ldots, x_{12},\, y_1, \ldots, y_{12},\, z_1, \ldots, z_{12}]$
defined in Section~\ref{sec:input_ideal} is the intersection
\[
I = P_1 \cap P_2
\]
of two different prime ideals $P_1, P_2$ where all three ideals $I$, $P_1$ and
$P_2$ have Krull dimension $26$. In particular, this implies that the above
intersection is the unique irredundant primary decomposition of $I$.
\end{theorem}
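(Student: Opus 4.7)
The plan is to run the GTZ algorithm twice, with the four improvements from Section~\ref{sec:computational_method}: once to extract $P_1$ and once to extract $P_2$ from the residual ideal. The decomposition and the dimension count then fall out of the GTZ recursion and Proposition~\ref{prop:primality_check}, while primality of each component must be verified separately.

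For the first pass I would sample several hundred maximal independent sets of $I$, rank them by the criteria described in Section~\ref{sec:computational_method} (ascending $d_u$, then ascending leading-coefficient degree, then ascending number of terms), and select $u_1 = \{x_1,\ldots,x_{12},y_1,\ldots,y_{12},z_1,z_2\}$ from the top of the ranking. Using the lexicographic order placing $X \setminus u_1$ above $u_1$ for the saturation step, as advocated in Subsection~\ref{ssec:order}, GTZ produces a candidate $P_1$ together with an $h_1 \in \Q[u_1]$ satisfying
\[
I = \bigl(I : \langle h_1^\infty \rangle\bigr) \cap \langle I, h_1^{m_1}\rangle
\quad\text{and}\quad
P_1 = I : \langle h_1^\infty \rangle.
\]
I would expect $P_1$ to be recognizable as a classical determinantal ideal (given that $I$ itself is built out of $3\times 3$ minors); if so, its primality and its Krull dimension $26$ follow from standard results and no further primality check is required.

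For the second pass, set $J := \langle I, h_1^{m_1}\rangle$, repeat the ranking procedure on maximal independent sets of $J$, and choose $u_2 = \{x_1,x_2,x_4,x_5,x_8,x_9,x_{11},x_{12},y_1,\ldots,y_{12},z_1,z_4,z_8,z_9,z_{11},z_{12}\}$. GTZ should produce a single candidate component $P_2$ and an $h_2 \in \Q[u_2]$ with $J : \langle h_2^\infty \rangle = P_2$, and then one verifies that $\langle J, h_2^{m_2}\rangle$ contains $P_1 \cap P_2$ so that the recursion halts. The main obstacle is certifying that $P_2$ is prime, since it is not determinantal. I would invoke Algorithm~\ref{alg:primality_check} with the same $u_2$: compute a minimal Gr\"obner basis of $P_2 K(u_2)[X \setminus u_2]$, check that this zero-dimensional ideal is maximal, and then verify $P_2 = P_2 : \langle c_i^\infty \rangle$ for each leading coefficient $c_i$. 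These saturation checks in line~\ref{line:sat} dominate the entire cost of the argument. To bring them within reach I would use Subsection~\ref{ssec:symmetry}: the $\Q$-algebra automorphism $\varphi$ induced by $\sigma = (1\,4)(2\,5)(3\,6)$ fixes $P_2$ and permutes the $c_i$'s, so only one representative per orbit has to be processed.

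Once $P_1$ and $P_2$ are both known to be prime and are seen to be distinct, the remaining claims are formal. Each $P_i$ contains $I$ and has Krull dimension $|u_i| = 26$, because $P_i K(u_i)[X \setminus u_i]$ is a maximal ideal in a rational function ring in ten variables (by Proposition~\ref{prop:primality_check} applied to $P_2$, and by classical theory for $P_1$). Hence $P_1$ and $P_2$ are both minimal primes of $I$, and $\dim R/I = 26$. Thus $I$ is equidimensional with no embedded primes, so the primary decomposition $I = P_1 \cap P_2$ is the unique irredundant one.
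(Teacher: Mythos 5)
Your proposal follows essentially the same route as the paper: run GTZ with the four listed improvements, extract $P_1$ (recognized as the determinantal ideal generated by all $3\times 3$ minors, hence prime of dimension $26$ by classical theory), extract $P_2$ from the residual ideal using the second maximal independent set, certify its primality via Algorithm~\ref{alg:primality_check} with the symmetry $\varphi$ to cut the saturation work, and conclude equidimensionality and uniqueness of the decomposition from the fact that $P_1 \neq P_2$ are both primes of the common dimension $26$. Your termination criterion, namely that $\langle J, h_2^{m_2}\rangle \supseteq P_1 \cap P_2$ (so that the third factor in $I = P_1 \cap P_2 \cap \langle J, h_2^{m_2}\rangle$ drops out), is exactly the right check and matches the paper's explicit verification that $I = P_1 \cap P_2$.
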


In our case, we first found the prime component $P_1$, with
$I \subsetneqq P_1$, which was relatively easy. We then found the ideal $P_2$
for which we were able to show that $I = P_1 \cap P_2$ and $I \subsetneqq P_2$.
But the hardest part of the computation were the saturations for the primality
check of $P_2$, see line~\ref{line:sat} in Algorithm~\ref{alg:primality_check}.
While checking that $P_2$ is already saturated w.r.t.\@ all the coefficients
$c_i$ was easy in some cases, the longest of these computations took 117~hours
on an Intel\textsuperscript{\tiny\textregistered} Core\texttrademark{} i7-6700
CPU with up to 4~GHz. For one coefficient, it takes almost
500~hours, but this computation can be avoided by the method described in
Subsection~\ref{ssec:symmetry}.

$P_1$ is the ideal generated by \emph{all} $3 {\times} 3$-minors
of the matrix $M$ defined in Section~\ref{sec:input_ideal}. The
structure of the prime component $P_2$ is more involved. Using the \Singular{}
command \verb+mstd()+, one can find a generating set $G$ of $P_2$ which has the
following properties\footnote{The set $G$ can be found in \Singular{}-readable
form at the very end of the \TeX{} file of this article which can be downloaded
from \url{https://arxiv.org/abs/1811.09530}.}:

\begin{itemize}
\item
$G$ consists of 44 polynomials $p_1, \ldots, p_{44}$. The elements
$p_1, \ldots, p_{16}$ are just the generators of $I$ listed in
Section~\ref{sec:input_ideal}.
\item
Each polynomial in $G' := \{ p_{17}, \ldots, p_{44} \}$ is homogeneous of
degree~12.
\item
The coefficients in $p_1, \ldots, p_{44}$ are all $1$, $-1$, $2$ or $-2$ where
the vast majority is just $1$ or $-1$.
\item
In each monomial in $G'$, each index $i$ from $1$ to $12$ appears exactly once,
as $x_i$, $y_i$ or $z_i$.
\item
For each single element in $G'$, all monomials have the same number of $x$'s,
$y$'s and $z$'s. For example, all monomials of $g_{17}$ have exactly six
$y$'s and six $z$'s, but no $x$'s. We denote this partition by $(0, 6, 6)$.
\item
The 28 elements of $G'$ are in one-to-one correspondence to the 28 possible
partitions of twelve items into three categories with no more then six items
belonging to the same category.
\item
For $g_{17}$ which corresponds to the partition $(0, 6, 6)$ as mentioned above,
we get all monomials as follows: From the $3 {\times} 4$-matrix
\[
\begin{pmatrix}
1 & 4 & 7 & 10 \\
2 & 5 & 8 & 11 \\
3 & 6 & 9 & 12
\end{pmatrix}
\]
choose two different columns A, B. Choose
two indices from A. Choose two indices from B such that they are not both in
the same rows as the two indices chosen from A. Let C, D be the remaining
columns. Choose one index from C. Choose one index from D which is in a
different row then the one chosen in C. The chosen indices are the $y_i$'s, all
the others are the $z_i$'s (or vice versa, of course). Repeating this process
for all possible choices, we get exactly the 216 monomials appearing in
$g_{17}$.
\item
The generators corresponding to the partitions $(6, 6, 0)$ and $(6, 0, 6)$ are
just the same as $g_{17}$ for some permutation of $(x, y, z)$.
\item
The generator $g_{19}$ corresponds to the partition $(1, 5, 6)$. It can be
mapped to $g_{17}$ via the map $x_i \mapsto y_i$. Some of the 252 terms of
$g_{19}$ cancel under this map.
\end{itemize}

The above properties indicate that the set $G$ has a combinatorial structure
which would be interesting to understand completely, including the
coefficients. So far, however, we were not able to reveal every detail of this
structure.

\section{Acknowledgements}

We would like to thank numerous anonymous referees for their feedback which
helped us to improve this article, as well as Fatemeh Mohammadi for clarifying
the importance of the discussed ideal and its primary decomposition in
algebraic statistics.

\end{document}